\documentclass[12pt]{amsart}

\usepackage[latin1]{inputenc}
\usepackage[english]{babel}
\usepackage{amsmath,amssymb,amsthm} 
\usepackage{enumitem}
\usepackage{color}
\usepackage{hyperref}

\definecolor{dark_purple}{rgb}{0.4, 0.0, 0.4}
\definecolor{dark_green}{rgb}{0.1, 0.6, 0.2}

\newtheorem{theorem}{Theorem}[section]
\newtheorem{lemma}[theorem]{Lemma}

\theoremstyle{definition}
\newtheorem{definition}[theorem]{Definition}

\newtheorem{conjecture}[theorem]{Conjecture}
\newtheorem*{remark*}{Remark}

\renewcommand{\Re}{\operatorname{Re}}
\newcommand{\A}{{\mathcal A}}
\newcommand{\B}{{\mathcal B}}
\renewcommand{\H}{{\mathcal H}}
\newcommand{\K}{{\mathcal K}}
\newcommand\C{{\mathbb C}}
\newcommand\matn{{\mathcal M}_n}
\newcommand\ta{{\tilde{A}}}

\title{From Clou\^atre-Ostermann-Ransford to Okubo-Ando}
\author[M. Hartz]{Michael Hartz}
\address{Fachrichtung Mathematik, Universit\"at des Saarlandes, 66123 Saarbr\"ucken, Germany}
\thanks{M.H. was partially supported by the Emmy Noether Program of the German Research Foundation (DFG Grant 466012782)}
\email{hartz@math.uni-sb.de}
\author[J. E. M\textsuperscript{c}Carthy]{John E.\ M\textsuperscript{c}Carthy}
\thanks{J.M. partially supported by National Science Foundation Grant  
DMS 2554487}
\address{Dept. of Mathematics, Washington University, St. Louis MO 63130, USA}
\email{mccarthy@wustl.edu}
\date{\today}

\subjclass[2020]{Primary 47L30; Secondary 46L07}
\keywords{completely bounded norm of homomorphism, Okubo--Ando theorem}

\begin{document}
\begin{abstract}
We prove that if $\theta$ is a continuous unital homomorphism of an operator algebra $\A$ into $B(\mathcal{H})$, and $\beta$ is in the dual space of $\A$, then the completely bounded norm of $\theta$ is less than or equal to the maximum of $1$ and the completely bounded norm of $\theta + \beta I $. As an application, we give another proof of the Okubo--Ando theorem.
\end{abstract}

\maketitle

\section{Introduction}
The Crouzeix conjecture is that the numerical range of an operator on a Hilbert space is always a 2-spectral set for the operator \cite{Crouzeix07}.
The sharpest result proved to date is by Crouzeix and Palencia \cite{CP17}, who proved that the numerical range is a $(1 + \sqrt{2})$-spectral set.
Examining the proofs in \cite{CP17,RS17}, Clou\^atre, Ostermann and Ransford were led to make the following conjecture, which if true would imply the Crouzeix conjecture \cite{COR23}.
\begin{conjecture} \cite{COR23}
Let $\A$ be a uniform algebra, and $\alpha: \A \to \A$ a unital anti-linear contraction.
Let $\theta: \A \to \matn(\C)$ be a continuous unital algebra homomorphism.

(i) 
If the linear map 
\[
\Lambda: \mathcal{A} \to M_n(\mathbb{C}), \quad f \mapsto \theta(f) + \theta (\alpha(f))^*,
\]
has norm at most $2$, then $\| \theta \| \leq 2$.

(ii) If $\alpha$ is a complete contraction, $\theta$ is completely bounded, and $\| \Lambda \|_{cb} \le 2$, then 
$\| \theta \|_{cb} \le 2$.
\end{conjecture}
They proved that  both parts of their conjecture hold with the weaker conclusion $\| \theta \| \le 1 + \sqrt{2} $ (resp. $\| \theta \|_{cb} \le 1 + \sqrt{2}$). They also proved that (i) 
  holds with constant $2$ if the range of $\alpha $ is the scalar multiples of the identity, even without the assumption that $\alpha(1) = 1$.
  They asked if (ii) also holds under this hypothesis. The object of this note is to prove that it does,
  which is an immediate consequence of the following result.
\begin{theorem}
  \label{thm:main_OA}
  Let $\mathcal{A}$ be a unital operator algebra and let $\theta: \mathcal{A} \to B(\mathcal{H})$ be a continuous unital
  homomorphism. Then for all $n \in \mathbb{N}$ and all $\beta$ in the dual space of $\mathcal{A}$,
  \begin{equation*}
    \|\theta^{(n)}\| \le \max(1,  \|\theta^{(n)} + \beta^{(n)} I \|).
  \end{equation*}
\end{theorem}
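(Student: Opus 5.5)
The plan is to fix $n$ and $F\in M_n(\A)$ with $\|F\|\le 1$, and to write $T=\theta^{(n)}(F)$, $b=\beta^{(n)}(F)\in M_n$ and $C=\max(1,\|\theta^{(n)}+\beta^{(n)}I\|)$. We must show $\|T\|\le C$. The hypothesis only controls $\|T+b\otimes I\|$, and the idea is to remove the scalar perturbation $b\otimes I$ by replacing $F$ with cleverly chosen functions of $F$. By von Neumann's inequality in the operator algebra $M_n(\A)$ (pass to a C$^*$-cover), every polynomial $p$ with $\|p\|_{\infty,\overline{\mathbb D}}\le 1$ gives $\|p(F)\|\le 1$. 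Since $\theta^{(n)}$ is a unital homomorphism, $\theta^{(n)}(p(F))=p(T)$. Hence
\[
\|p(T)+\psi(p)\otimes I\|\le C,\qquad \psi(p):=\beta^{(n)}(p(F))\in M_n .
\]
Here $\psi$ is a bounded $M_n$-valued functional on the disc algebra, so by Hahn--Banach and Riesz it is represented by a matrix-valued measure on $\mathbb T$. In particular $p\mapsto p(T)$ extends to a bounded homomorphism of the disc algebra, so $T$ is polynomially bounded.

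The first key step is to use polynomials $p_N$ with $\|p_N\|_\infty\le 1$ that reproduce $T$ in a weak sense while the scalar part $\psi(p_N)$ becomes negligible. Concretely, I will fix unit vectors $x,y\in \C^n\otimes\H$ with $|\langle Tx,y\rangle|\ge \|T\|-\varepsilon$. I then try to find $p_N$ with:
\begin{itemize}
\item[(a)] $\langle p_N(T)x,y\rangle\to\langle Tx,y\rangle$, or at least the limit has modulus $\ge |\langle Tx,y\rangle|$;
\item[(b)] $\psi(p_N)\to 0$, or at least $\langle (\psi(p_N)\otimes I)x,y\rangle\to 0$.
\end{itemize}
The natural candidates are $p_N(z)=z\cdot\frac1N\sum_{k<N}\omega^k z^{kM}$ and related Ces\`aro/Fej\'er-type averages of $z^{1+kM}$ with $M$ large. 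These have sup norm $\le 1$ after normalisation, behave like $z$ against the fixed vectors on average, and let the Riemann--Lebesgue lemma kill the absolutely continuous part of the measure representing $\psi$.

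The second step is the singular part of $\psi$, in particular atoms at points of $\mathbb T$. Riemann--Lebesgue does not help there, and this is where the ``$1$'' in $\max(1,\cdot)$ should enter. If $\psi$ has an atom at $\zeta\in\mathbb T$, then $\|p(T)+p(\zeta)c\otimes I\|\le C$ for all contractive $p$. Taking $p$ peaking near $\zeta$ and small elsewhere then isolates a scalar contribution of modulus at most $1$, and I expect to bound the remaining part of $T$ by $C$ using $p$ that vanishes at $\zeta$ (e.g.\ Möbius-type factors $\frac{z-\zeta r}{1-\bar\zeta r z}$ with $r\to1$).

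Third, taking $\varepsilon\to0$ and the supremum over $F$ gives $\|\theta^{(n)}\|\le C$. I expect the main obstacle to be making step (b) precise for a general matrix-valued measure with a singular continuous part. My fallback is an ergodic/averaging argument: replace $F$ by $\lambda F$ with $\lambda$ averaged over $\mathbb T$. This is legitimate since $\|\lambda F\|\le1$, and the average of $\bar\lambda\,\psi(\lambda z)$ picks out only the first Taylor coefficient $\hat\psi(z)$, i.e.\ $\bar\lambda\,\beta^{(n)}(\lambda F)=\beta^{(n)}(F)$, which unfortunately does not vanish. So the averaging has to be done on higher powers $z^{1+kM}$ combined with a limit in $M$, as in the first step.
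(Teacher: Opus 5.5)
\textbf{There is a genuine gap.} Your reduction to $\|p(T)+\psi(p)\otimes I\|\le C$ for contractive scalar polynomials $p$ is fine. The core step, however, is finding contractive $p_N$ that satisfy (a) and (b) at once, and this fails in general. The obstruction has nothing to do with singular measures or atoms.

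Here is an example. Take $\A=A(\mathbb D)$, $\theta(f)=f(S)$ with $S=\begin{pmatrix}1/2&3\\0&0\end{pmatrix}$ (spectrum in $\mathbb D$, so $\theta$ is bounded), $\beta(f)=f'(0)$, $n=2$ and $F=\begin{pmatrix}0&z\\0&0\end{pmatrix}$.
\begin{itemize}
\item Since $F^2=0$, we have $p(F)=p(0)I+p'(0)F$. Hence $T=E_{12}\otimes S$ with $\|T\|=\|S\|>1$, and $\psi(p)=p'(0)E_{12}$, which is represented by an absolutely continuous measure.
\item Because $S$ has rank one, the norming pair is essentially unique: $x=e_2\otimes u$, $y=e_1\otimes Su/\|S\|$ with $u=(1,6)/\sqrt{37}$.
\item Then $\langle p(T)x,y\rangle=p'(0)\|S\|$ and $\langle(\psi(p)\otimes I)x,y\rangle=p'(0)\langle u,Su\rangle/\|S\|$, where $\langle u,Su\rangle=\tfrac12$.
\end{itemize}
So (a) forces $|p_N'(0)|\to1$ while (b) forces $p_N'(0)\to0$. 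Thus $\|T\|\le C$ does hold here (by the theorem), but your mechanism cannot detect it.

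More fundamentally, for $n\ge2$ the data $(T,\psi)$ you retain cannot bound $\|T\|$ at all. For instance, $T=5E_{12}$ and $\psi(p)=-p'(0)T$ satisfy every constraint you use, since $\|p(T)+\psi(p)\|=|p(0)|\le1$. Your explicit $p_N$ also fail (a) on their own: if $T^2=0$ then $p_N(T)=\frac1N T$.

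What is missing is extremality, combined with matrix-valued automorphisms of the ball. The paper never makes the scalar term small; it shows the term is asymptotically orthogonal to the extremal vector.
\begin{itemize}
\item Take $a_k$ in the unit ball of $\matn(\A)$ and unit vectors $x_k$ with $\|R_kx_k\|\to C:=\|\theta^{(n)}\|>1$, where $R_k=\theta^{(n)}(a_k)$.
\item For $A\in\matn(\C)$ of small norm, the Potapov--M\"obius map $m_{A\otimes I}$ preserves the closed unit ball of $\matn(\A)$. Since $\theta^{(n)}$ is a unital homomorphism, $m_{A\otimes I_{\H}}(R_k)=\theta^{(n)}(m_{A\otimes I_{\K}}(a_k))$, so $\|m_{A\otimes I}(R_k)\|\le C$.
\item Replace $A$ by $tA$ and expand this bound to first order in $t$ at $x_k$. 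This gives $2|1-C^2|\,|t|\limsup_k|\langle R_kx_k,(A\otimes I_{\H})x_k\rangle|\le O(|t|^2)$.
\item Hence $\langle R_kx_k,(A_k\otimes I_{\H})x_k\rangle\to0$ for every bounded sequence of scalar matrices $A_k$. This is exactly where $C\ne1$, i.e.\ the $\max(1,\cdot)$, is used.
\item Finally, $\|(\theta^{(n)}+\beta^{(n)}I)(a_k)x_k\|^2\ge\|R_kx_k\|^2+2\Re\langle R_kx_k,(\beta^{(n)}(a_k)\otimes I_{\H})x_k\rangle$, and the right side tends to $C^2$.
\end{itemize}
Your reduction to polynomials in one arbitrary $F$ loses both ingredients. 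The orthogonality holds only for (near-)extremal $F$, and $m_{A\otimes I}(F)$ with non-scalar $A$ is not a scalar polynomial in $F$.
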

Here, $(\theta + \beta I)(a) = \theta(a) + \beta(a) I$, and the notation $\theta^{(n)}$ means the $n$-th matrix ampliation; see Definition \ref{defbc}.
The case $n=1$ was proved in \cite[Thm. 4.1]{COR23}. (They stated the theorem for uniform algebras, but their proof extends to all operator algebras). 
The chief reason to consider ampliations of $\theta$ is that control of the completely bounded norm $\|\theta\|_{cb} = \sup_{n} \|\theta^{(n)}\|$
is crucial from the point of view of dilation theory. We discuss one such application.

The class $C_\rho$ of operators with a unitary $\rho$-dilation (see Definition \ref{def_crho}) was introduced by Sz.-Nagy and Foia\c{s}.
Operators of class $C_\rho$ can be characterized intrinsically; see \cite[Section I.11]{SFB+10}.
In particular, the class $C_1$ is the class of contractions (operators of norm at most one); this is the Sz-Nagy dilation theorem
\cite{Sz.-Nagy53}.
 $C_{2}$ consists of all operators whose numerical range lies in the closed unit disk; this is Berger's strange dilation theorem \cite{Berger65}.

Every operator in $C_\rho$ was shown to be  similar to a contraction in \cite{SF67}. Okubo and Ando obtained the sharp bound on the condition number \cite{OA75}.
\begin{theorem}[Okubo--Ando]
  \label{thm:OA}
  Let $\rho \ge 1$ and let $T \in B(\mathcal{H})$ be of class $C_{\rho}$. Then there exists an invertible
  operator $S \in B(\mathcal{H})$ such that $\|S^{-1} T S\| \le 1$ and such that $\|S\| \|S^{-1}\| \le \rho$.
\end{theorem}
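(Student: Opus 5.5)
The plan is to deduce Theorem~\ref{thm:OA} from Theorem~\ref{thm:main_OA} applied to the disc algebra $\A = A(\mathbb{D})$, and then to invoke Paulsen's similarity theorem for completely bounded homomorphisms.

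\textbf{Setting up the homomorphism.} Let $T$ be of class $C_\rho$, and let $U$ be a unitary $\rho$-dilation on $\K \supseteq \H$. Thus $T^k = \rho P_\H U^k|_\H$ for all $k \ge 1$. For a polynomial $p$ this gives
\[
P_\H\, p(U)|_\H = p(0) I + \tfrac{1}{\rho}\bigl(p(T) - p(0)I\bigr).
\]
Define $\theta(p) = p(T)$. The formula shows that
\[
\tfrac{1}{\rho}\theta(p) + \bigl(1 - \tfrac{1}{\rho}\bigr)p(0) I
\]
is the compression of $p \mapsto p(U)$ to $\H$. By von Neumann's inequality for unitaries and the spectral theorem, this compression is completely contractive with respect to the supremum norm on $\overline{\mathbb{D}}$. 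In particular
\[
\|p(T)\| \le \rho\|p\|_\infty + (\rho-1)|p(0)| \le (2\rho - 1)\|p\|_\infty .
\]
Hence $\theta$ extends by continuity to a continuous unital homomorphism $\theta : A(\mathbb{D}) \to B(\H)$. It is multiplicative on polynomials, so by density it is multiplicative on all of $A(\mathbb{D})$.

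\textbf{Applying the main theorem.} Let $\beta(f) = (\rho - 1) f(0)$; this is a bounded functional on $A(\mathbb{D})$. The previous paragraph says exactly that $\theta + \beta I = \rho \cdot (\text{compression of } f \mapsto f(U))$. Therefore $\|\theta^{(n)} + \beta^{(n)} I\| \le \rho$ for every $n$. Theorem~\ref{thm:main_OA} then yields
\[
\|\theta^{(n)}\| \le \max(1,\rho) = \rho \quad \text{for all } n,
\]
that is, $\|\theta\|_{cb} \le \rho$.

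\textbf{Similarity.} By Paulsen's similarity theorem, a completely bounded unital homomorphism $\theta$ of an operator algebra is similar to a completely contractive one via an invertible $S$ with $\|S\|\|S^{-1}\| = \|\theta\|_{cb}$; the infimum is attained. Its proof uses the Haagerup--Paulsen--Wittstock factorization $\theta = V^*\pi(\cdot)W$ and then builds $S$ from the invariant-subspace structure. Applying this to $\theta$ gives an invertible $S$ with $\|S\|\|S^{-1}\| \le \rho$ such that $f \mapsto S^{-1}\theta(f) S$ is contractive. Evaluating at $f(z) = z$ gives $\|S^{-1} T S\| \le 1$.

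\textbf{Main obstacle.} The substantive input is Theorem~\ref{thm:main_OA}, which we are allowed to assume. Within this deduction, the delicate point is citing Paulsen's theorem in a form where the similarity constant is exactly $\|\theta\|_{cb}$ and not merely arbitrarily close to it. I would check that the standard proof produces $S$ with $\|S\|\|S^{-1}\| \le \|\theta\|_{cb}$ on the nose. If only $\|\theta\|_{cb} + \varepsilon$ were available, I would pass to the limit using a weak-$*$ compactness argument on a normalized sequence $S_\varepsilon$.
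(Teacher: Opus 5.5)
Your proposal is correct and follows the paper's own proof. Both take $\beta(f)=(\rho-1)f(0)$, read off $\|\theta+\beta I\|_{cb}\le\rho$ from the $\rho$-dilation identity, apply Theorem~\ref{thm:main_OA} to get $\|\theta\|_{cb}\le\rho$, and conclude with Paulsen's similarity theorem; the paper works directly on $\mathbb{C}[z]$ with the supremum norm over $\overline{\mathbb{D}}$ rather than extending to $A(\mathbb{D})$, which changes nothing. Your worry about attainment is unnecessary, since Paulsen's theorem (Theorem 9.1 in his book) already gives an $S$ with $\|S\|\|S^{-1}\|=\|\theta\|_{cb}$. Should you ever need the limiting argument, run it on the positive operators $Q_\varepsilon=(S_\varepsilon S_\varepsilon^*)^{-1}$, normalized so that $(\rho+\varepsilon)^{-2}I\le Q_\varepsilon\le I$ and $T^*Q_\varepsilon T\le Q_\varepsilon$; these conditions survive WOT limits, whereas naive weak-$*$ limits of $S_\varepsilon$ do not respect inverses or products.
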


We give another proof of the Okubo--Ando theorem in Section \ref{secoa}.

\section{Definitions}

We recall a few definitions.
By an operator algebra, we mean  a  subalgebra $\mathcal{A}$ of $B(\mathcal{K})$
for some Hilbert space $\mathcal{K}$; we do not assume that the algebra is self-adjoint. Matrices with entries from $\A$ come  equipped with natural norms. Indeed, if 
 $\A \subset B(\K)$, then $\matn(\A)$, the $n$-by-$n$ matrices with entries from $\A$,
is a subalgebra of $\matn(B(\mathcal{K}))$, and this can be canonically identified with 
$ B(\mathcal{K}^n)$. The norm on $\matn(\A)$ is its norm as a subalgebra of  $ B(\mathcal{K}^n)$.

\begin{definition}
\label{defbc}
Let $\A $ and $\B $ be operator algebras, and $\theta: \A \to \B$ a mapping.
We write $\theta^{(n)}: \matn(\mathcal{A}) \to \matn(\B)$ for the $n$-th ampliation of the map $\theta$, defined by applying $\theta$ entry-wise.
We define \[
\| \theta\|_{cb} = \sup_{n \in {\mathbb N}} \| \theta^{(n)}\|.\] We say $\theta$ is completely bounded if
$\| \theta \|_{cb} < \infty$, and say $\theta$ is a complete contraction if
$\| \theta \|_{cb} \le 1$.
\end{definition}
Since the work of Stinespring \cite{Stinespring55} and Arveson \cite{Arveson69}, it has been known that studying the ampliations can be very helpful in understanding the original map.
\begin{definition}
\label{def_crho}
Let $\rho > 0$. An operator $T \in B(\mathcal{H})$ is said to be of class $C_\rho$ if there exists a unitary operator $U$ on a Hilbert space containing $\mathcal{H}$ such that
\begin{equation*}
  T^n = \rho P_{\mathcal{H}} U^n \big|_{\mathcal{H}} \quad \text{ for all } n \ge 1.
\end{equation*}
In this case, $U$ is called a unitary $\rho$-dilation of $T$.
Equivalently,
\begin{equation}
  \label{eqn:rho_dilation}
  f(T) + (\rho-1) f(0) = \rho P_{\mathcal{H}} f(U) \big|_{\mathcal{H}} \quad \text{ for all } f \in \mathbb{C}[z].
\end{equation}
\end{definition}

\section{Proof of the Okubo-Ando theorem}
\label{secoa}

The conclusion of the Okubo--Ando theorem (Theorem \ref{thm:OA}) implies
that if $T$ is of class $C_\rho$, then the inequality
\begin{equation}
  \label{eqn:OA_ineq}
  \|f(T)\| \le \rho \|f\|_{\overline{\mathbb{D}}}
\end{equation}
holds for all $f \in \mathbb{C}[z]$, an improvement from the obvious bound
\[
\|f(T)\| \le \rho  \|f\|_{\overline{\mathbb{D}}} + |\rho -1| |f(0)|
\]
from \eqref{eqn:rho_dilation}.

The original proof  of Okubo and Ando \cite{OA75} is based on analyzing geometric properties of the unitary $\rho$-dilation of $T$.
More recently, Caldwell, Greenbaum and Li \cite{CGL17} gave a simple direct proof of Inequality \eqref{eqn:OA_ineq} for $\rho = 2$
(in the case of finite dimensional Hilbert spaces). Their proof is based on a variational argument, which they attribute to Crouzeix.
This was extended to a proof of the general Inequality \eqref{eqn:OA_ineq} by Clou\^atre, Ostermann and Ransford \cite{COR23}.

On the other hand,
Theorem \ref{thm:main_OA} shows that Inequality \eqref{eqn:OA_ineq} also holds for matrix-valued polynomials,
so that Theorem \ref{thm:OA} follows from Paulsen's similarity theorem, which says that the completely bounded norm of the map 
\[
\mathbb{C}[z]\to B(\H),\ f \mapsto f(T)
\]
equals the infimum of the condition numbers $\|S\| \|S^{-1}\|$ over all $S$ such that $\|S^{-1} T S\| \le 1$.

\begin{proof}[Proof of Theorem \ref{thm:OA}]
  Let $\mathcal{A} = \mathbb{C}[z]$, equipped with the supremum norm over $\overline{\mathbb{D}}$. Define 
  \[
    \beta: \mathbb C[z] \to \mathbb{C}, \quad \beta(f) = (\rho-1) f(0),
  \]
  and let $\theta$ be the polynomial functional calculus of $T$. Then \eqref{eqn:rho_dilation} shows that $\|\theta + \beta I \|_{cb} \le \rho$, hence $\|\theta\|_{cb} \le \rho$ by Theorem \ref{thm:main_OA}. Paulsen's similarity theorem \cite[Theorem 9.1]{Paulsen02} then yields the invertible operator $S$.
\end{proof}

\section{Proof of main result}

The key to proving Theorem \ref{thm:main_OA} is to adapt the variational argument in \cite{CGL17} to matrix-valued polynomials. To this end, we will use biholomorphic
automorphisms of the unit ball of $B(\H)$.
In general, if $\mathcal{H}$ is a Hilbert space and if $A \in B(\mathcal{H})$ with $\|A\| < 1$, the Potapov--M\"obius transform on the unit ball of $B(\mathcal{H})$
is defined by
\begin{equation}
\label{eqpm}
  m_A(X) = (I - A A^*)^{-1/2} (X + A)(I + A^* X)^{-1} (I - A^* A)^{1/2}.
\end{equation}
If $\|A\| < 1$ and $\|X\| \le 1$, then $\|m_A(X)\| \le 1$, i.e.\ $m_A$ maps the closed unit ball of $B(\mathcal{H})$ into itself; see e.g.\ \cite[Theorem 8.19]{IS85}.
Let us note that even if $\| X \| > 1$, formula \eqref{eqpm} makes sense as long as  $\|A\| \|X\| < 1$.

Of particular importance for us is the case when the Hilbert space is of the form $\mathcal{H}^n$, so $B(\mathcal{H}^n) = \matn(B(\mathcal{H})) = \matn(\mathbb{C}) \otimes B(\mathcal{H})$. Given a scalar matrix $A \in \matn(\mathbb{C})$ with $\|A\| < 1$, we
shall define
\[
\ta = A \otimes I_{\mathcal{H}}
\]
 and thus define $m_\ta(T)$ for $T \in \matn(B(\mathcal{H}))$ with $\|A\| \|T\| < 1$.
In particular, we have $\|m_\ta(T)\| \le 1$ whenever $\|T\| \le 1$. We will also
use the observation that if $T \in M_n(\mathcal{A})$ for some norm-closed unital operator algebra $\mathcal{A}$,
then $m_{\ta}(T) \in M_n(\mathcal{A})$ for all $A \in M_n(\mathbb{C})$ with $\|A\| < 1$ and $\|A\| \|T\| < 1$.

The following lemma is a generalization of \cite[Theorem 5.1]{CGL17} to matrix-valued functions, sequences of operators and infinite dimensional Hilbert spaces.

\begin{lemma}
  \label{lem:extremal}
  Let $(R_k)$ be a sequence in $\matn(B(\mathcal{H}))$. Let $C \in (0,\infty) \setminus \{1\}$.
  Assume that whenever $A \in \matn(\mathbb{C})$ satisfies $\|A\| \|R_k\| < 1$ then $ \|m_\ta(R_k)\| \le C$.

  Let $(x_k)$ be a sequence of unit vectors in $\mathcal{H}^n$ with $\lim_{k \to \infty} \|R_k x_k\| = C$.
  Then for every bounded sequence $(A_k)$ in $\matn(\mathbb{C})$,
  \begin{equation*}
    \lim_{k \to \infty} \langle R_k x_k, (A_k \otimes I_{\mathcal{H}}) x_k \rangle  =0.
  \end{equation*}
\end{lemma}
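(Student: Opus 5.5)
The plan is to adapt the first-order variational argument of \cite[Theorem 5.1]{CGL17}. We perturb $R_k$ by $m_{t\ta_k}$ for a small scalar parameter $t$ and use that $x_k$ is asymptotically extremal for $R_k$. The first-order term in $t$ will turn out to be $(1-C^2)$ times the quantity we want to control. Since $C\neq 1$, this forces that quantity to tend to $0$.

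\emph{Step 1: boundedness and expansion.} Taking $A=0$ in the hypothesis gives $\|R_k\| = \|m_0(R_k)\|\le C$ for all $k$. Let $M=\sup_k\|A_k\|$ and assume, without loss of generality, that $M>0$. For real $t$ with $|t|MC<1/2$, the hypothesis applies to $tA_k$. Expanding \eqref{eqpm} for $A=t\ta_k$ and $X=R_k$, using $(I+tA^*X)^{-1}=I-tA^*X+O(t^2)$ and $(I-t^2AA^*)^{\mp1/2}=I+O(t^2)$, we get
\[
m_{t\ta_k}(R_k) = R_k + t\bigl(\ta_k - R_k\ta_k^*R_k\bigr) + E_k(t), \qquad \|E_k(t)\|\le K t^2 .
\]
Here $K$ depends only on $C$ and $M$, so the bound is uniform in $k$ because all the Neumann series converge uniformly. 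Obtaining this uniform remainder bound is the main technical point, but it is routine given $\|R_k\|\le C$ and $\|A_k\|\le M$.

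\emph{Step 2: using extremality.} Since $\|R_k\|\le C$ and $\|R_kx_k\|\to C$, we have
\[
\|R_k^*R_kx_k - C^2x_k\|^2 \le C^4 - 2C^2\|R_kx_k\|^2 + C^2\|R_kx_k\|^2 \to 0 .
\]
Write $\lambda_k=\langle R_kx_k,\ta_kx_k\rangle$. Then
\[
\langle R_kx_k, R_k\ta_k^*R_kx_k\rangle = \langle \ta_k R_k^*R_kx_k, R_kx_k\rangle = C^2\overline{\lambda_k}+o(1).
\]
Now expand $\|m_{t\ta_k}(R_k)x_k\|^2\le C^2$ using Step 1. This gives
\[
\|R_kx_k\|^2 + 2t\,\Re\bigl(\lambda_k - C^2\overline{\lambda_k}\bigr) + o(1)\cdot|t| \le C^2 + K' t^2 ,
\]
where $K'$ depends only on $C$ and $M$. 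The middle term equals $2t(1-C^2)\Re\lambda_k$, and $\|R_kx_k\|^2\to C^2$. Hence for each fixed small $t$,
\[
\limsup_k\, 2t(1-C^2)\Re\lambda_k\le K't^2 .
\]

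\emph{Step 3: conclusion.} Apply Step 2 with $A_k$ replaced by $e^{i\varphi}A_k$, for $\varphi$ in a finite set of angles, and with $t$ of either sign. This gives $\limsup_k|\Re(e^{-i\varphi}\lambda_k)|\le K'|t|/(2|1-C^2|)$. Here we use $C\neq1$ to divide. Choosing $\varphi\in\{0,\pi/2\}$ bounds $\limsup_k|\lambda_k|$ by a constant times $|t|$. Letting $t\to0$ gives $\lambda_k\to0$, as claimed.
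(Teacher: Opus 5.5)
Your proof is correct. It uses the same first-order variational strategy as the paper, but the key step is carried out by a different mechanism.

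The paper never expands $m_{\ta}(R_k)$ itself. It applies $\|m_\ta(R_k)z\|\le C\|z\|$ to the test vector $z=(I-\ta^*\ta)^{-1/2}(I+\ta^*R_k)y$. This choice cancels the inverse $(I+\ta^*R_k)^{-1}$ and gives, for every unit vector $y$,
\[
\|(I-\ta\ta^*)^{-1/2}(R_k+\ta)y\|^2\le C^2\|(I-\ta^*\ta)^{-1/2}(I+\ta^*R_k)y\|^2 .
\]
The factor $C^2$ in front of $\langle y,\ta^*R_ky\rangle=\overline{\langle R_ky,\ta y\rangle}$ then comes for free from the right-hand side. So the paper needs no Neumann series and no information about $R_k^*R_kx_k$. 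Near-extremality of $x_k$ enters only at the very end, through $C^2-\|R_kx_k\|^2\to0$.

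You instead differentiate the M\"obius map directly, with tangent vector $\ta_k-R_k\ta_k^*R_k$. This forces you to produce the $C^2\overline{\lambda_k}$ term from the approximate eigenvector relation $R_k^*R_kx_k\approx C^2x_k$. That is a valid and transparent substitute, at the cost of uniform Neumann-series remainder estimates.

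A second difference concerns the sequence $(A_k)$. The paper proves the limit for each fixed $A$ and then passes to bounded sequences using homogeneity and the finite dimensionality of $M_n(\mathbb{C})$. Your remainder bounds depend only on $C$ and $M=\sup_k\|A_k\|$, so you handle $(A_k)$ directly and never use $\dim M_n(\mathbb{C})<\infty$.

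One trivial fix: if $C\le 1/2$, the condition $|t|MC<1/2$ does not keep $|t|M$ away from $1$. You need that both for $m_{t\ta_k}$ to be defined and for your $O(t^2)$ constants to be uniform. Simply also require $|t|M\le 1/2$; since $t\to0$ anyway, this costs nothing.
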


\begin{proof}
  The assumption implies that $\|R_k\| \le C$ for all $k$ by taking $A=0$.
  Let $A \in \matn(\mathbb{C})$ with $C \|A\| < 1$ and let $y \in \mathcal{H}^n$ be a unit vector.
  Define $z_k = (I - \ta^* \ta)^{-1/2} (I + \ta^* R_k) y$.
  Then by assumption,
  \begin{align*}
    \|(I - \ta^* \ta)^{-1/2} (R_k + \ta) y\|^2  &= \|m_\ta(R_k) z_k\|^2 \\ &\le C^2 \|z_k\|^2  \\
                                                        &= C^2 \|(I - \ta^* \ta)^{-1/2} (I + \ta^* R_k) y\|^2.
  \end{align*}

  We now replace $A$ with $t A$ for a small complex number $t$ and expand to first order.
  We have $(I - |t|^2 \ta \ta^*)^{-1/2} = I + \mathcal{O}(|t|^2)$ and $(I - |t|^2 \ta^* \ta)^{-1/2} = I + \mathcal{O}(|t|^2)$.
  Therefore, the last inequality implies that
  \begin{equation*}
    \|R_k y\|^2 + 2 \Re \overline{t} \langle R_k y, \ta y \rangle \le C^2 ( \|y\|^2 + 2 \Re t \langle y, \ta^* R_k y \rangle )
    + \mathcal{O}(|t|^2),
  \end{equation*}
  where the implied constant in the $\mathcal{O}(|t|^2)$ term is independent of the unit vector $y$ and the number $k$
  because $\|R_k\| \le C$.
  Rearranging yields
  \begin{equation*}
 2   (1 - C^2)  \Re \overline{t} \langle R_k y , \ta y \rangle \le C^2 \|y\|^2 - \|R_k y\|^2 + \mathcal{O}(|t|^2)
  \end{equation*}
  for all complex numbers $t$ in a neighborhood of zero and all unit vectors $y$.
  By varying the argument of $t$, we conclude that
  \begin{equation*}
   2 |1 - C^2|  |t| | \langle R_k y , \ta y \rangle| \le C^2 \|y\|^2 - \|R_k y\|^2 + \mathcal{O}(|t|^2).
  \end{equation*}

  We apply the last inequality to $y = x_k$, the sequence in the statement of the lemma. Since the estimate
  above is independent of the unit vector $y$ and $k$, it follows that
  \begin{equation*}
 2   |1 - C^2|  |t| \limsup_{k \to \infty} | \langle R_k x_k , \ta x_k \rangle| \le \mathcal{O}(|t|^2)
  \end{equation*}
  for all $t$ in a neighborhood of zero. Since $C \neq 1$, we conclude that
  \[
    \lim_{k \to \infty} \langle R_k x_k, \ta x_k \rangle = 0.
  \]
  We have proved this identity for all $A \in \matn(\mathbb{C})$ with $ C \|A\| < 1$,
  but then by homogeneity, it holds for all $A \in \matn(\mathbb{C})$. This shows that
  the linear functionals
  \begin{equation*}
    \matn(\mathbb{C}) \to \mathbb{C}, \quad A \mapsto \langle \ta x_k, R_k x_k \rangle,
  \end{equation*}
  converge to zero pointwise, and hence in norm since $\matn(\mathbb{C})$ is finite dimensional. This completes the proof.
\end{proof}


\begin{proof}[Proof of Theorem \ref{thm:main_OA}]
  Given Lemma \ref{lem:extremal}, the proof proceeds along the same lines as that of \cite[Theorem 4.1]{COR23}.
  Since the details differ, we provide the argument. 
Let $\A \subset B(\K)$. Since $\theta$ and $\beta$ are continuous, we may replace $\mathcal{A}$ with its
norm closure if necessary and hence assume that $\mathcal{A}$ is closed.
Fix $n \in \mathbb{N}$ and let $C = \|\theta^{(n)}\|$.
  If $C \le 1$, there is nothing to prove, so assume $C > 1$.

  Let $(a_k)$ be a sequence in the unit ball of $\matn(\mathcal{A})$
  and let $(x_k)$ be a sequence of unit vectors in $\mathcal{H}^n$ such that $\|\theta^{(n)}(a_k) x_k\| \to C$.
  Let $R_k = \theta^{(n)}(a_k) \in \matn(B(\mathcal{H}))$.
  Then $\|R_k\| \ge 1$ for sufficiently large $k$; we may assume that this holds for all $k$.
  We claim that the $R_k$ satisfy
  the assumption of Lemma \ref{lem:extremal}. Indeed, if $A \in \matn(\mathbb{C})$ with $\|A\| \|R_k\| < 1$,
  then $\|A\| < 1$, and since $\theta$ is a unital homomorphism,
  \begin{equation*}
    m_{A \otimes I_{\H}}(R_k) = m_{A \otimes I_{\H}}(\theta^{(n)}(a_k)) = \theta^{(n)} (m_{A \otimes I_{\K}}(a_k)).
  \end{equation*}
  The discussion preceding Lemma \ref{lem:extremal} shows that $m_{A \otimes I_{\K}}(a_k)$ belongs to the closed unit ball of $\matn(\mathcal{A})$,
  hence $\|m_{A \otimes I_{\H}}(R_k)\| \le C$ for all $k$, as claimed.
  Now
  \begin{align*}
    \|\theta^{(n)} + \beta^{(n)} I \|^2
    &\ge \| (\theta^{(n)} + \beta^{(n)} I) (a_k) x_k\|^2 \\
    &\ge \|\theta^{(n)} (a_k) x_k\|^2 + 2 \Re \langle R_k x_k , (\beta^{(n)}(a_k) \otimes I_{\mathcal{H}} ) x_k \rangle 
  \end{align*}
  for all $k$. Taking the limit as $k \to \infty$, the second summand vanishes by Lemma \ref{lem:extremal}, and the first summand
  converges to $C^2 = \|\theta^{(n)}\|^2$, so
  \[
    \|\theta^{(n)}\| \le \|\theta^{(n)} + \beta^{(n)} I \|. \qedhere
  \]
\end{proof}

\bibliographystyle{amsplain}
\bibliography{literature}

\end{document}